\newtheorem{thm}{Theorem}[section]
\newtheorem{cor}[thm]{Corollary}
\newtheorem{conj.}[thm]{Conjecture}
\newtheorem{lem}[thm]{Lemma}
\newtheorem{prop}[thm]{Proposition}
\theoremstyle{definition}
\newtheorem{defn}[thm]{Definition}
\theoremstyle{remark}
\newtheorem{rem}[thm]{Remark}
\numberwithin{equation}{section}
\newtheorem{exa}[thm]{Example}
\newcommand{\kf}{$K$-frames}
\newcommand{\hs}{Hilbert spaces}
\def\CC{\mathbb{C}}
\begin{document}

\title[Controlled $K$-frames in Hilbert Spaces]
{Controlled $K$-frames in Hilbert Spaces}%
\author[A. Rahimi$^1$, Sh. Najafzadeh$^{2}$ and M. Nouri$^{3}$]{A. Rahimi$^1$, Sh. Najafzadeh$^{2}$ and M. Nouri$^{3}$ }
\address{$^1$Department of Mathematics, University of Maragheh, Maragheh, Iran.}

\email{rahimi@maragheh.ac.ir}
\address{ $^{2}$Department of Mathematics, Payame Noor University, Iran.}
\email{najafzadeh1234@yahoo.ie}
\address{  $^{3}$Department of Mathematics, Payame Noor University, Iran.}
\email{nouri@yahoo.com}

\subjclass[2000]{Primary 42C40; Secondary 41A58, 47A58,.}
\keywords{Frame, Bessel sequence, Controlled frame, $K$-frame, Atomic system.}

\begin{abstract}
\kf\  were recently introduced by L. G\v{a}vruta in \hs\ to study atomic systems with respect to bounded linear operator. Also controlled frames have been recently introduced by P. Balazs in \hs\ to improve the numerical efficiency of interactive algorithms for inverting the frame operator. In this manuscript, we will define the concept of the controlled \kf\ and  will show that controlled \kf\  are equivalent to \kf\  and so the controlled operator $C$ can be used as preconditions in applications.



\end{abstract}
\maketitle
\section{Introduction}
Frames in \hs\ were first proposed by Duffin and Schaeffer to deal with nonharmonic Fourier series in 1952 \cite{Duff} and widely studied from 1986 since the great work by Daubechies at al.\cite{Daub}.
Now frames play an important role not only in the theoretics but also in many kinds of applications and have been widely applied in signal processing \cite{Ferr}, sampling \cite{Eldar1,Eldar2}, coding and communications \cite{strh}, filter bank theory \cite{Bolcskei}, system modeling \cite{Duday} and so on.
For special applications many other types of frames were proposed, such as the fusion frames \cite{Casazza1,Casazza2} to deal with hierarchical data processing, $g$-frames \cite{sun} by Sun to deal with all existing frames as united object, oblique dual frames \cite{Eldar1} by Elder to deal with sampling reconstructions, and so on. \par
The notion of
\kf\    were recently introduced by L. G\v{a}vruta to study the atomic systems with respect to a bounded linear operator $K$ in \hs. From \cite{12}, we know that \kf\   are more general than ordinary frames in sense that the lower frame bound only holds for the elements in the range of the  $K$, where $K$ is a bounded linear operator in a separable Hilbert Space $H$.\par
Controlled frames have been introduced recently to improve the numerical efficiency of interactive algorithms for inverting the frame operator on abstract \hs\ \cite{Balazs}, however they are used earlier in \cite{Bogdanova} for spherical wavelets. This concept generalized for fusion frames in \cite{khos} and $g$-frames in \cite{Rahimi}.\par
In this manuscript, the concept of controlled $K$-frame will be defined and it will be shown that any controlled $K$-frame is equivalent to a $K$-frame and the role of controller operators are like the role of preconditions matrices or operators in linear algebra.\par
This paper is organized as follows. In section 2, we fix the notations of this paper, summarize known and prove some new results needed for the rest of the paper. In section 3, we define the concept of controlled \kf\  and we show that a controlled $K$-frame is equivalent to a $K$-frame.\par
Throughout this manuscript, $H$ is a separable \hs\ , $K$ is a bounded linear operator on $H$, $B(H)$ the family of all linear bounded operators on $H$ and $GL(H)$ the set of all bounded invertible operators on $H$ with bounded inverse.
\section{Preliminaries and Notations}
Now we state some notations and theorems which are used in the present paper.

A bounded operator $T\in B(H)$ is called positive (respectively, non-negative), if $\langle Tf,f\rangle>0$ for all $f\ne0$ (respectively, $\langle Tf,f\rangle\ge0$ for all $f$).
Every non-negative operator is clearly self-adjoint.
If $A\in B(H)$  is non-negative, then there exists a unique non-negative operator $B$ such that $B^2=A$. Furthermore $B$ commutes with every operator that commutes with $A$.
This will be denoted by $B=A^{\frac{1}{2}}$. Let $B^+(H)$ be the set of positive operators on $H$. For self-adjoint operators $T_1$ and $T_2$, the notation $T_1\leq T_2$ or $T_2-T_1\geq 0$ means
$$ \langle T_1 f,f\rangle\leq \langle T_2f,f\rangle \quad \forall f\in H.  $$
The following result is needed in the sequel, but straightforward to prove:
\begin{prop}\label{prp:equs}
Let $T:\ H\to H$ be a linear operator. Then the following condition are equivalent:
\begin{enumerate}
\item There exist $m>0$ and $M<\infty$, such that $mI\le T\le MI$;
\item $T$ is positive and there exist $m>0$ and $M<\infty$, such that $m\|f\|^2\le \|T^{\frac{1}{2}}f\|^2\le M\|f\|^2$ for all $f\in H$;
\item $T$ is positive and $T^{\frac{1}{2}}\in GL(H)$;
\item There exists a self-adjoint operator  $A\in GL(H)$
, such that \[A^2=T;\]
\item $T\in GL^+(H)$;
\item There exist constants $m>0$ and $M<\infty$ and operator\\ $C\in GL^+(H)$, such that $m'C\le T\le M'C$;
\item For every $C\in GL^+(H)$, there exist constants $m>0$ and\\ $M<\infty$, such that $m'C\le T\le M'C$.
\end{enumerate}
\end{prop}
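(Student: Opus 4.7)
The plan is to prove the proposition by splitting the seven conditions into two groups: the intrinsic conditions \textup{(1)--(5)}, which are different ways of saying that $T$ is a positive invertible operator with a spectral gap, and the relative conditions \textup{(6)--(7)}, which compare $T$ with an auxiliary operator $C$. I would first establish a cycle $(1)\Rightarrow(2)\Rightarrow(3)\Rightarrow(4)\Rightarrow(5)\Rightarrow(1)$ and then bridge to \textup{(6)} and \textup{(7)}.

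For the first cycle, the implications are mostly manipulations of the square root. From \textup{(1)}, positivity of $T$ is immediate since $mI\le T$ with $m>0$, so the non-negative square root $T^{1/2}$ exists and $\langle Tf,f\rangle=\|T^{1/2}f\|^2$ converts the operator inequality into the norm bounds of \textup{(2)}. These norm bounds say exactly that $T^{1/2}$ is bounded below and above, hence $T^{1/2}\in GL(H)$; combined with positivity this is \textup{(3)}. Taking $A=T^{1/2}$ gives \textup{(4)}, since the square root of a positive operator is self-adjoint. For $(4)\Rightarrow(5)$, if $A$ is self-adjoint and invertible with $A^2=T$, then $\langle Tf,f\rangle=\|Af\|^2>0$ for $f\neq 0$ (invertibility of $A$ makes this strict), and $T=A^2$ is invertible as a product of invertibles, so $T\in GL^+(H)$. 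Finally, to close the loop $(5)\Rightarrow(1)$, I use that $T\in GL^+(H)$ implies $T^{-1}$ exists and is positive; then
\[
\|f\|^2=\|T^{-1/2}T^{1/2}f\|^2\le\|T^{-1}\|\,\langle Tf,f\rangle,
\]
giving the lower bound with $m=\|T^{-1}\|^{-1}$, while the upper bound comes from $\langle Tf,f\rangle\le\|T\|\,\|f\|^2$, yielding $M=\|T\|$.

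For the bridge to \textup{(6)} and \textup{(7)}, note that \textup{(7)} trivially implies \textup{(6)}. To get $(6)\Rightarrow(1)$, I apply the already-proved $(5)\Rightarrow(1)$ to the operator $C\in GL^+(H)$ itself: this yields constants $m_0,M_0>0$ with $m_0I\le C\le M_0I$, whence $m'm_0I\le m'C\le T\le M'C\le M'M_0I$. For the return $(1)\Rightarrow(7)$, given any $C\in GL^+(H)$ and the same two-sided scalar estimate $m_0I\le C\le M_0I$, one has $M_0^{-1}C\le I\le m_0^{-1}C$ (obtained by dividing through by the relevant scalar), so under $mI\le T\le MI$ we conclude
\[
mM_0^{-1}C\le mI\le T\le MI\le Mm_0^{-1}C,
\]
establishing \textup{(7)} with constants depending on $C$.

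The main obstacle I expect is the step $(5)\Rightarrow(1)$: one needs to turn invertibility plus positivity into a uniform positive lower bound, and the cleanest route goes through the inequality $\|f\|\le\|T^{-1/2}\|\,\|T^{1/2}f\|$, which implicitly uses that $T^{1/2}$ has a bounded inverse --- a fact coming either from the functional calculus or from the observation that $(T^{1/2})^{-1}=(T^{-1})^{1/2}$. A minor subtlety in $(4)\Rightarrow(3)$ is that a self-adjoint square root of a positive operator need not be the non-negative square root $T^{1/2}$, but one can bypass this by noting that $T=A^2$ invertible forces $T^{1/2}=|A|=(A^2)^{1/2}$ to be invertible as well. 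All remaining steps are straightforward bookkeeping with the standard order on self-adjoint operators.
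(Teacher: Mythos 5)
The paper offers no proof of this proposition to compare against: it is introduced with the remark that it is ``straightforward to prove'' and left to the reader (it is essentially Proposition~2.1 of the cited Balazs--Antoine--Grybos paper on weighted and controlled frames). Your argument is correct and is the standard one: a cycle through the intrinsic conditions (1)--(5) via the positive square root, then the bridge to (6) and (7) by applying the scalar bounds $m_0I\le C\le M_0I$ to the auxiliary operator $C$ itself. One step deserves a word more than you give it: in $(2)\Rightarrow(3)$ you assert that $T^{1/2}$ being bounded below and above makes it invertible, which is false for general operators (the unilateral shift is bounded below and above but not surjective); the correct justification is that $T^{1/2}$ is self-adjoint, so being bounded below gives injectivity together with closed \emph{and} dense range, hence surjectivity. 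Since positivity of $T^{1/2}$ is already part of your hypotheses at that point, this is a gloss rather than a gap. Your closing remark about $(4)\Rightarrow(3)$ is moot for your chosen cycle, since you pass from (4) directly to (5); and your observation that $(5)\Rightarrow(1)$ is the only step needing real content (via $\|T^{-1/2}\|^2=\|T^{-1}\|$) is exactly right.
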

\begin{defn}
Given $T\in GL^+(H)$, any two constants $m_T,\ M_T$ such that
\[m_TI\le T\le M_TI\]
are called lower and upper bounds of $T$, respectively. If $m_T$ is maximal, resp. if $M_T$ is minimal, we call them the optimal bounds and we denote them by $m_T^{(opt)}$, $M_T^{(opt)}$.
The upper and lower bounds are clearly not unique.\\
The following results are easily proved using Proposition \ref{prp:equs}:
\end{defn}
\begin{cor}
Let $T\in GL^+(H)$. then
\begin{enumerate}
\item $\|T\|=M_T^{(opt)}$
\item $\sigma(T)\subseteq[m_T,M_T]$, for any lower, resp. upper bounds.
\end{enumerate}
\end{cor}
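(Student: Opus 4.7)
The plan is to treat each item separately, using Proposition \ref{prp:equs} to move freely between the operator-inequality formulation, the quadratic-form formulation, and invertibility.

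For (1), I would first note that since $T\in GL^+(H)$, $T$ is self-adjoint and non-negative, so its norm satisfies $\|T\| = \sup_{\|f\|=1} \langle Tf,f\rangle$ (the standard identity for self-adjoint operators, which uses non-negativity to drop the absolute value). The optimal upper bound $M_T^{(opt)}$ is by definition the infimum of all $M$ such that $\langle Tf,f\rangle \le M\|f\|^2$ for every $f\in H$. That infimum is exactly $\sup_{\|f\|=1} \langle Tf,f\rangle$, and it is attained as a bound in the limit. Thus $M_T^{(opt)} = \|T\|$.

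For (2), I would argue by showing that every real $\lambda$ outside $[m_T, M_T]$ is in the resolvent set. Suppose $\lambda < m_T$. Then for all $f\in H$,
\[
\langle (T-\lambda I)f,f\rangle \ge (m_T-\lambda)\|f\|^2,
\]
so $T-\lambda I \ge (m_T - \lambda) I > 0$. By Proposition \ref{prp:equs} (equivalence of (1) and (5)), $T-\lambda I \in GL^+(H)$, in particular invertible, so $\lambda \notin \sigma(T)$. A symmetric argument with $\lambda I - T \ge (\lambda - M_T) I$ handles $\lambda > M_T$. Finally, since $T$ is self-adjoint, $\sigma(T)\subseteq\mathbb R$, so no complex $\lambda$ outside $[m_T, M_T]$ can lie in $\sigma(T)$ for a different reason; combining gives $\sigma(T)\subseteq[m_T,M_T]$.

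The only conceptual point to be careful about is that the statement allows \emph{any} (not only optimal) lower and upper bounds, so I should phrase both arguments in terms of generic $m_T, M_T$ satisfying $m_T I \le T \le M_T I$; this is automatic from how the implications above are written. Neither step should present a real obstacle: (1) is essentially the definition of the optimal upper bound combined with the self-adjoint norm formula, and (2) is the standard spectral-containment argument for positive self-adjoint operators, with Proposition \ref{prp:equs} supplying the invertibility of $T-\lambda I$ needed to put $\lambda$ in the resolvent set.
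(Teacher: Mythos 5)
Your proof is correct and follows exactly the route the paper intends: the paper offers no written proof, stating only that the corollary is ``easily proved using Proposition \ref{prp:equs}'', and your argument supplies precisely that, using the self-adjoint norm identity $\|T\|=\sup_{\|f\|=1}\langle Tf,f\rangle$ for part (1) and the equivalence of the operator inequality $mI\le T-\lambda I\le MI$ (with $m>0$) with invertibility for part (2). No gaps.
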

\begin{cor}
For $T\in GL^+(H)$, the numbers $m_{T^{-1}}=M_T^{-1}$ and $M_{T^{-1}}=m_T^{-1}$ are bounds for $T^{-1}$. In particular $\|T^{-1}\|=\dfrac{1}{m_T^{(opt)}}$
\end{cor}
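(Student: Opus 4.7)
The plan is to derive the two bounds on $T^{-1}$ directly from the bounds on $T$ via the square-root functional calculus, then combine with the previous corollary to read off the norm.

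First I would note that the hypothesis $T\in GL^+(H)$ guarantees, by Proposition \ref{prp:equs}, that $T^{1/2}$ exists and lies in $GL(H)$; hence $T^{-1}$ is bounded, self-adjoint (as the inverse of a self-adjoint operator), and positive because
\[
\langle T^{-1}f,f\rangle=\langle T^{-1/2}f,T^{-1/2}f\rangle=\|T^{-1/2}f\|^2>0
\]
whenever $f\ne 0$. Thus $T^{-1}\in GL^+(H)$, so it makes sense to speak of its lower and upper bounds.

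Next I would convert the bounds $m_T I\le T\le M_T I$ into bounds for $T^{-1}$ by substituting $f=T^{-1/2}g$. From $m_T\|f\|^2\le \langle Tf,f\rangle$, the substitution gives
\[
m_T\|T^{-1/2}g\|^2\le \langle T\cdot T^{-1/2}g,\,T^{-1/2}g\rangle=\langle g,g\rangle,
\]
that is, $m_T\langle T^{-1}g,g\rangle\le \|g\|^2$ for every $g\in H$, which is precisely $T^{-1}\le m_T^{-1}I$. The same substitution in $\langle Tf,f\rangle\le M_T\|f\|^2$ yields $\|g\|^2\le M_T\langle T^{-1}g,g\rangle$, i.e., $M_T^{-1}I\le T^{-1}$. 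Together these say that $M_T^{-1}$ and $m_T^{-1}$ are lower and upper bounds for $T^{-1}$.

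For the norm identity, I would invoke item (1) of the preceding corollary applied to $T^{-1}\in GL^+(H)$, giving $\|T^{-1}\|=M_{T^{-1}}^{(opt)}$. Since the assignment $m\mapsto m^{-1}$ is strictly decreasing on $(0,\infty)$, a lower bound $m$ for $T$ is optimal (largest) exactly when $m^{-1}$ is the smallest admissible upper bound for $T^{-1}$, so $M_{T^{-1}}^{(opt)}=(m_T^{(opt)})^{-1}$, giving the claimed formula. The only mildly subtle point is this optimality transfer; everything else reduces to a routine functional-calculus manipulation and therefore presents no real obstacle.
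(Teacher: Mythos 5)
Your proof is correct and is essentially the argument the paper has in mind: the corollary is stated without proof as an easy consequence of Proposition \ref{prp:equs}, and your substitution $f=T^{-1/2}g$ is the standard way to convert $m_TI\le T\le M_TI$ into $M_T^{-1}I\le T^{-1}\le m_T^{-1}I$, after which the norm identity follows from $\|T^{-1}\|=M_{T^{-1}}^{(opt)}$. The optimality-transfer step you flag is indeed the only point needing a word: it closes because the same substitution applied to $T^{-1}$ (whose inverse is $T$) shows every upper bound of $T^{-1}$ is the reciprocal of a lower bound of $T$, so the correspondence $m\mapsto m^{-1}$ is a bijection between the two sets of bounds and carries the optimum to the optimum.
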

Clearly if  there exists $0<m\le M<\infty$ such that
\[m\le T\le M\] then
for $T^{-1}$ we have
\[M^{-1}\le T^{-1}\le m^{-1}.\]
It is well-known that not all bounded operators $U$ on a Hilbert space $H$
are invertible: an operator $U$ needs to be injective and surjective in order
to be invertible. For doing this, one can use right-inverse operator.  The following lemma shows that if an operator $U$ has closed range,
there exists a "right-inverse operator" $U^\dagger$ in the following sense:

\begin{lem}\label{111}\cite{ole}
Let $H$ and $K$ be Hilbert spaces and suppose that $U : K\to H$
is a bounded operator with closed range $R_U$. Then there exists a bounded
operator $U^\dagger :H\to K$ for which $$ UU^\dagger x=x\quad\forall x\in R_U.$$
\end{lem}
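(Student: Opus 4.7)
The plan is to construct $U^\dagger$ as (the extension of) the inverse of $U$ restricted to the orthogonal complement of its kernel. The closed-range hypothesis is what makes this inverse bounded.

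First, I would decompose $K = N(U) \oplus N(U)^\perp$ and consider the restriction $\tilde U := U|_{N(U)^\perp} : N(U)^\perp \to R_U$. By construction $\tilde U$ is injective, and it is surjective onto $R_U$ because every element of $R_U$ has a preimage in $K$ which can be written uniquely as a sum of a vector in $N(U)$ and a vector in $N(U)^\perp$, and only the latter component contributes. So $\tilde U$ is a continuous bijection between two Hilbert spaces (both $N(U)^\perp$ and $R_U$ are closed subspaces of their ambient spaces by the closed-range hypothesis, hence Hilbert spaces in their own right).

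Next, invoking the bounded inverse theorem, $\tilde U$ has a bounded inverse $\tilde U^{-1} : R_U \to N(U)^\perp$. Let $P : H \to R_U$ denote the orthogonal projection, which exists and is bounded because $R_U$ is closed. I would then define
\[ U^\dagger := \tilde U^{-1} P : H \to K, \]
which is bounded as a composition of bounded operators.

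Finally, I would verify the claimed identity: for $x \in R_U$ we have $Px = x$, so $U^\dagger x = \tilde U^{-1} x \in N(U)^\perp$, and hence $UU^\dagger x = \tilde U \tilde U^{-1} x = x$. The only step that requires real content is the appeal to the bounded inverse theorem for $\tilde U$; everything else is bookkeeping about the direct sum decomposition and the orthogonal projection onto $R_U$, which is exactly why the hypothesis that $R_U$ is closed is essential.
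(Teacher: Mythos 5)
Your proof is correct: the paper states this lemma without proof, citing Christensen's book, and your argument (restrict $U$ to $N(U)^\perp$, apply the bounded inverse theorem to the resulting bijection onto the closed subspace $R_U$, and compose with the orthogonal projection of $H$ onto $R_U$) is precisely the standard construction of the pseudo-inverse given in that reference. Nothing is missing; the appeal to the bounded inverse theorem is indeed where the closed-range hypothesis does its work.
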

The operator $U^\dagger$ in the  Lemma \ref{111} is called the
pseudo-inverse of $U$. In the literature, one will often see the pseudo-inverse
of an operator $U$ with closed range defined as the unique operator $U^\dagger$
satisfying that
$$
N_{U^\dagger}=R_U^\perp,\quad R_{U^\dagger}=N_U^\perp,\quad UU^\dagger x=x\quad\forall x\in R_U.
$$
\subsection{K-Frames}
Let us first introduce the concepts of $K$-frames, the atomic system of $K$ and the related operators.
\begin{defn}\label{def:kframe}
A sequence $\{f_n\}_{n=1}^\infty\subset H$  is called a $K$-frame for $H$, if there exist constants $A, B>0$ such that
\begin{equation}\label{eq:kframe}
A\|K^*f\|^2\le \sum_{n=1}^{\infty}|\langle f,f_n\rangle|^2\le B\|f\|^2,~~\forall f\in H.
\end{equation}
we call $A$ and $ B$  lower and upper frame bound for $K$-frame $\{f_n\}_{n=1}^\infty\subset H$, respectively if only the right inequality of \ref{def:kframe} holds, $\{f_n\}_{n=1}^\infty\subset H$ is called a $K$-Bessel sequence.
\end{defn}
\begin{rem}
If $K=I$, then $K$-frames are just the ordinary frames.
\end{rem}
\begin{rem}
In the following we will assume that $R(K)$ is closed, since this can assure that the pseudo-inverse $K^\dagger$ of $K$ exists.
\end{rem}
\begin{defn}\label{def:atom}\cite{Gavruta}
A sequence $\{f_n\}_{n=1}^\infty\subset H$  is called an atomic system for $K$, if the following conditions are satisfied:
\begin{enumerate}
\item $\{f_n\}_{n=1}^\infty$ is a Bessel sequence.
\item For any $x\in H$, there exists $a_x=\{a_n\}\in l^2$ such that
\[kx=\sum_{n=1}^{\infty}a_nf_n\]
where $\|a_x\|_{l^2}\le C\|x\|$, $C$ is positive constant.
\end{enumerate}
\end{defn}
Suppose that $\{f_n\}_{n=1}^\infty$ is a $K$-frame for $H$. Obviously it is a Bessel sequence, so we can define the following operator
\[T:l^2\to H,\quad Ta=\sum_{n=1}^{\infty}a_nf_n,\quad a=\{a_n\}\in l^2,\]
then we have
\[T^*:H\to l^2\]
\[T^*f=\{\langle f,f_n\rangle\}_{n=1}^\infty.\]
Let $S=TT^*$, we obtain
\[Sf=\sum_{n=1}^{\infty}\langle f,f_n\rangle f_n  \quad \forall f\in H \]
we call $T,\ T^*$ and $\ S$ the synthesis operator, analysis operator and frame operator for $K$-frame $\{f_n\}_{n=1}^\infty$, respectively.

\begin{thm}
Let $\{f_n\}_{n=1}^\infty$ be a Bessel sequence in $H$. Then $\{f_n\}_{n=1}^\infty$ is a $K$-frame for $H$, if and only if there exists $A>0$ such that \[S\ge AKK^*,\] where $S$ is the frame operator for $\{f_n\}_{n=1}^\infty$.
\end{thm}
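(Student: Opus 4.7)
The plan is to translate the defining inequality of a $K$-frame into an operator inequality by identifying each side with a quadratic form. First I would recall from the construction just above that the analysis operator satisfies $T^*f=\{\langle f,f_n\rangle\}_{n=1}^\infty$ and $S=TT^*$, so for every $f\in H$
\[
\langle Sf,f\rangle=\langle TT^*f,f\rangle=\|T^*f\|^2=\sum_{n=1}^{\infty}|\langle f,f_n\rangle|^2.
\]
Similarly, $\|K^*f\|^2=\langle KK^*f,f\rangle$. These two identities are the engine of the whole proof.

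With this in hand, the $K$-frame lower inequality $A\|K^*f\|^2\le\sum_{n=1}^{\infty}|\langle f,f_n\rangle|^2$ is equivalent, for all $f\in H$, to
\[
\langle AKK^*f,f\rangle\le\langle Sf,f\rangle.
\]
Since $S$ and $AKK^*$ are both self-adjoint (indeed non-negative), this is by the very definition of the order on self-adjoint operators recalled in Section~2 nothing but the operator inequality $S\ge AKK^*$. So the forward direction is immediate: given the $K$-frame inequalities, read off $S\ge AKK^*$.

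For the converse, assume $\{f_n\}_{n=1}^\infty$ is Bessel with bound $B$, so that the right-hand estimate $\sum_{n=1}^\infty|\langle f,f_n\rangle|^2\le B\|f\|^2$ already holds, and assume $S\ge AKK^*$ for some $A>0$. Reversing the calculation above,
\[
A\|K^*f\|^2=\langle AKK^*f,f\rangle\le\langle Sf,f\rangle=\sum_{n=1}^{\infty}|\langle f,f_n\rangle|^2,
\]
which is the lower $K$-frame bound. Combining the two estimates gives \eqref{eq:kframe}, so $\{f_n\}_{n=1}^\infty$ is a $K$-frame.

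I do not anticipate a genuine obstacle: the statement is essentially a reformulation, and the only point worth flagging is that one must use $S=TT^*$ (so that $\langle Sf,f\rangle=\|T^*f\|^2$) together with $\|K^*f\|^2=\langle KK^*f,f\rangle$ to pass between the sum form and the operator-inequality form. The Bessel hypothesis is used only to guarantee the upper bound $\sum|\langle f,f_n\rangle|^2\le B\|f\|^2$ in the converse direction; it also ensures $S$ is a well-defined bounded operator, so that the inequality $S\ge AKK^*$ makes sense.
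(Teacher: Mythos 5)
Your proposal is correct and follows essentially the same route as the paper: both identify $\sum_n|\langle f,f_n\rangle|^2=\langle Sf,f\rangle$ and $A\|K^*f\|^2=\langle AKK^*f,f\rangle$ and read the $K$-frame inequality as the operator inequality $S\ge AKK^*$. Your version is slightly more explicit about where the Bessel hypothesis enters in the converse, but the argument is the same.
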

\begin{proof} The sequence
$\{f_n\}_{n=1}^\infty$ is a $K$-frame for $H$ with frame bounds $A, B$ and frame operator $S$, if and only if
\begin{equation}\label{eq:th25}
A\|K^*f\|^2\le\sum_{K=1}^{\infty}|\langle f,f_n\rangle |^2=\langle Sf,f\rangle\le B\|f\|^2\ ,~\forall f\in H,
\end{equation}
that is ,
\[\langle AKK^*f,f\rangle\le\langle Sf,f\rangle\le \langle Bf,f\rangle\ ,~\forall f\in H.\]
so the conclusion holds.
\end{proof}
\begin{rem}
Frame operator of a \kf\ is not invertible on $H$ in general, but we can show that it is invertible on the subspace $R(K)\subset H$. In fact, since $R(K)$ is closed, there exists a pseudo-inverse $K^\dagger$ of $K$, such that
$KK^\dagger f=f\ ,~\forall f\in R(K)\ ,~namely\  KK^\dagger|_{R(K)}=I_{R(K)}$, so we have $I_{R(K)}^*=(K^\dagger|_{R(K)})^*K^*$. Hence for any $f\in R(K)$, we obtain
\[\|f\|=\|(K^\dagger|_{R(K)})^*K^*f\|\le\|K^\dagger\|.\|K^*f\|,\]
that is, $\|K^*f\|^2\ge \|K^\dagger\|^{-2}\|f\|^2$. Combined with (\ref{eq:th25}) we have
\begin{equation}
\langle Sf,f\rangle\ge A\|K^*f\|^2\ge A\|K^\dagger\|^{-2}\|f\|^2\ ,~\forall f\in R(K).
\end{equation}
So, from the definition of $K$-frame we have
\begin{equation}
A\|K^\dagger\|^{-2}\|f\|\le\|Sf\|\le B\|f\|\ ,~\forall f\in R(K),
\end{equation}
wich implies that $S:\ R(K)\to S(R(K))$ is a homeomorphism, furthermore, we have
\[B^{-1}\|f\|\le \|S^{-1}f\|\le A^{-1}\|K^\dagger\|^2\|f\|\ ,~\forall f\in S(R(K)).\]
\end{rem}
\begin{prop}
Let $\{f_n\}_{n=1}^\infty\subset H$. Then $\{f_n\}_{n=1}^\infty$ is a $K$-frame for $H$, if and only if there exist a Bessel sequence $\{g_n\}_{n=1}^\infty\subset H$ such that
\begin{equation}\label{ex:prp27}
Kf=\sum_{n=1}^{\infty}\langle f,g_n\rangle f_n~,~\forall f\in H.
\end{equation}\end{prop}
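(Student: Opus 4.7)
The strategy is to rephrase the reconstruction identity $Kf = \sum_n \langle f, g_n\rangle f_n$ as an operator factorization $K = TV$, where $T:l^2 \to H$ is the synthesis operator $T\{c_n\} = \sum_n c_n f_n$ of $\{f_n\}$ and $V:H\to l^2$ is the bounded operator whose $n$-th coordinate is $(Vf)_n = \langle f, g_n\rangle$. The proposition then asks exactly when $K$ factors through the synthesis operator of $\{f_n\}$, and the preceding theorem, which characterizes $K$-frames by the operator inequality $S \ge A KK^*$, supplies precisely the hypothesis needed for such a factorization.

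For the forward implication, I would assume $\{f_n\}$ is a $K$-frame with lower bound $A$, let $T$ be its synthesis operator, and $S=TT^*$ its frame operator. The preceding theorem gives $A KK^* \le TT^*$. I would then invoke Douglas' factorization lemma (or, alternatively, build the factorization by hand using the pseudo-inverse $K^\dagger$ and the homeomorphism property of $S|_{R(K)}$ noted in the earlier remark) to obtain a bounded operator $V:H\to l^2$ with $K=TV$ and $\|V\|^2 \le 1/A$. The Riesz representation theorem then lets me write the $n$-th coordinate of $Vf$ as $\langle f, g_n\rangle$ for a uniquely determined $g_n \in H$. The inequality $\sum_n |\langle f, g_n\rangle|^2 = \|Vf\|^2 \le (1/A)\|f\|^2$ shows that $\{g_n\}$ is Bessel, and $Kf = TVf = \sum_n \langle f, g_n\rangle f_n$ is the desired identity.

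For the converse, assume the reconstruction holds with $\{g_n\}$ Bessel of bound $B_g$. The lower $K$-frame inequality follows from duality: for arbitrary $f, h \in H$,
\[
|\langle K^*f, h\rangle|^2 = |\langle f, Kh\rangle|^2 = \Bigl|\sum_n \overline{\langle h, g_n\rangle}\langle f, f_n\rangle\Bigr|^2 \le B_g \|h\|^2 \sum_n |\langle f, f_n\rangle|^2
\]
by Cauchy--Schwarz applied to the $l^2$ sequences $\{\langle h, g_n\rangle\}$ and $\{\langle f, f_n\rangle\}$. Taking the supremum over unit $h$ yields $\|K^*f\|^2 \le B_g \sum_n |\langle f, f_n\rangle|^2$, i.e., a lower $K$-frame bound of $1/B_g$. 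The upper Bessel bound for $\{f_n\}$ is implicit in the statement (the series on the right-hand side must make sense) and in standard settings follows from a Banach--Steinhaus argument applied to the truncated synthesis maps.

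The main obstacle is the factorization step in the forward direction. With Douglas' lemma as a black box the construction is immediate, but without it one has to exhibit $V$ by hand: this requires reducing to the closed subspace $R(K)$, inverting $S$ there via the homeomorphism $S|_{R(K)}$, and extending by zero (or via $K^\dagger$) off $R(K)$, which is short but needs careful bookkeeping. Once $V$ is in hand, the remainder is just Riesz representation and Cauchy--Schwarz.
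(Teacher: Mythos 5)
The paper states this proposition without proof (it is imported from the $K$-frame literature), so there is no internal argument to compare against; I am judging your proof on its own terms. Your forward direction is correct and is in fact the standard argument: from the preceding theorem one gets $AKK^*\le TT^*$, Douglas' factorization theorem produces a bounded $V:H\to l^2$ with $K=TV$ and $\|V\|^2\le 1/A$, and setting $g_n=V^*\delta_n$ (where $\{\delta_n\}$ is the canonical basis of $l^2$) gives $(Vf)_n=\langle f,g_n\rangle$, the Bessel bound $\sum_n|\langle f,g_n\rangle|^2=\|Vf\\|^2\le A^{-1}\|f\|^2$, and the reconstruction $Kf=TVf=\sum_n\langle f,g_n\rangle f_n$. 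Your Cauchy--Schwarz computation in the converse correctly yields the lower $K$-frame inequality with constant $1/B_g$.

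The gap is the upper (Bessel) bound for $\{f_n\}$ in the converse, which you dismiss as ``implicit.'' The Banach--Steinhaus argument you gesture at gives uniform boundedness of the truncated operators $f\mapsto\sum_{n\le N}\langle f,g_n\rangle f_n$, i.e., it controls the composite map whose limit is $K$; it says nothing about $\sum_n|\langle f,f_n\rangle|^2$. In fact no argument can close this gap from the stated hypotheses: take $K=0$, $g_n=0$ and $f_n=ne_1$ for a fixed unit vector $e_1$. Then \eqref{ex:prp27} holds with $\{g_n\}$ Bessel, yet $\sum_n|\langle e_1,f_n\rangle|^2=\sum_n n^2=\infty$, so $\{f_n\}$ is not a Bessel sequence and hence not a $K$-frame. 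The proposition as printed is missing the hypothesis, present in G\v{a}vruta's original formulation, that $\{f_n\}$ be a Bessel sequence; once that is added as a standing assumption your converse reduces to the lower bound you already proved, and your forward direction is unaffected. You should state explicitly that you are adding this hypothesis rather than leaving it to a Banach--Steinhaus argument that does not deliver it.
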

One may wonder whether the position of the two Bessel sequences $\{f_n\}_{n=1}^\infty$ and $\{g_n\}_{n=1}^\infty$ in (\ref{ex:prp27}) are interchangeable? In fact, the answer us negative.

\begin{exa}
Suppose that $H=\CC^3$, $\{g_n\}_{n=1}^3=\{e_1,e_2,e_3\}$, where
$e_1=\begin{pmatrix}1\\ 0\\0\end{pmatrix}$,
$e_2=\begin{pmatrix}0\\ 1\\0\end{pmatrix}$,
$e_3=\begin{pmatrix}0\\ 0\\1\end{pmatrix}$.
Now define $K\in B(H)$ as follows
\[K:\ H\to H~,~Ke_1=e_1~,~ke_2=e_1~,~Ke_3=e_2.\]
Obviously, $\{g_n\}_{n=1}^3$ is an ordinary frame for $H$. By Proposition \ref{prp: fgh}  we know that $$\{f_n\}_{n=1}^3=\{Kg_n\}_{n=1}^3$$ is a $K$-frame for $H$.
Since $\{g_n\}_{n=1}^3=\{e_1,e_2,e_3\}$, so for any $f\in H$, we have $Kf=\sum_{n=1}^{3}\langle f,g_n\rangle g_n$, it follows that
\[Kf=\sum_{n=1}^{3}\langle f,g_n\rangle Kg_n=\sum_{n=1}^{3}\langle f,g_n\rangle f_n~,~\forall f\in H.\]
But $Kf\ne\sum_{n=1}^{3}\langle f,f_n\rangle g_n~,~\forall f\in H$. In fact, if we take $f=e_3$ , then we have
\[Ke_3=e_2\ne\sum_{n=1}^{3}\langle e_3,f_n\rangle g_n=\langle e_3,e_1\rangle e_1+\langle e_3,e_1\rangle e_2+\langle e_3,e_2\rangle e_3=0.\]
Through $\{f_n\}_{n=1}^\infty$ and $\{g_n\}_{n=1}^\infty$ in (\ref{ex:prp27}) are not interchangeable in general, we can show that there exists another type of dual such that $\{f_n\}_{n=1}^\infty$ and a sequence derived by $\{g_n\}_{n=1}^\infty$ are interchangeable in the subspace $R(K)$.
\end{exa}
\begin{prop}\label{prp: fgh}
Suppose that $\{f_n\}_{n=1}^\infty$ and $\{g_n\}_{n=1}^\infty$ are as in (\ref{ex:prp27}). then there exists a sequence
$\{h_n\}_{n=1}^\infty=\{(K^\dagger|_{R(K)})^*g_n\}_{n=1}^\infty$ derived by $\{g_n\}_{n=1}^\infty$ such that
\[f=\sum_{n=1}^{\infty}\langle f,h_n\rangle f_n~,~\forall f\in R(K).\]
Moreover, $\{h_n\}_{n=1}^\infty$ and $\{f_n\}_{n=1}^\infty$ are interchangeable for any $f\in R(K)$.
\end{prop}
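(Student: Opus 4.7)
The plan is to reduce both claims to the given identity $Kf=\sum_{n=1}^\infty\langle f,g_n\rangle f_n$ (valid on all of $H$) by exploiting the two key properties of the pseudo-inverse: $KK^\dagger|_{R(K)}=I_{R(K)}$, and the fact that $KK^\dagger$ extended to all of $H$ is the orthogonal projection $P_{R(K)}$ onto $R(K)$. Since $R(K)$ is assumed closed, $K^\dagger$ is a bounded operator, so $\{h_n\}=\{(K^\dagger|_{R(K)})^*g_n\}$ is automatically a Bessel sequence (the bounded operator $(K^\dagger)^*$ preserves the Bessel property), which takes care of all convergence issues.

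First I would establish the representation $f=\sum_n\langle f,h_n\rangle f_n$ on $R(K)$. For $f\in R(K)$, write $f=K(K^\dagger f)$ and substitute into the given identity applied to the vector $K^\dagger f\in H$:
\[
f=K(K^\dagger f)=\sum_{n=1}^\infty\langle K^\dagger f,g_n\rangle f_n=\sum_{n=1}^\infty\langle f,(K^\dagger|_{R(K)})^*g_n\rangle f_n=\sum_{n=1}^\infty\langle f,h_n\rangle f_n,
\]
using that $\langle K^\dagger f,g_n\rangle=\langle f,(K^\dagger|_{R(K)})^*g_n\rangle$ for $f\in R(K)$ by the adjoint relation.

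For the interchangeability claim $f=\sum_n\langle f,f_n\rangle h_n$, the idea is to first take the adjoint of the representation of $K$. Testing against $w\in H$ gives
\[
\langle Kf,w\rangle=\sum_{n=1}^\infty\langle f,g_n\rangle\langle f_n,w\rangle=\Big\langle f,\sum_{n=1}^\infty\langle w,f_n\rangle g_n\Big\rangle,
\]
so $K^*w=\sum_n\langle w,f_n\rangle g_n$ for every $w\in H$. Applying the bounded operator $(K^\dagger|_{R(K)})^*$ termwise yields
\[
\sum_{n=1}^\infty\langle f,f_n\rangle h_n=(K^\dagger|_{R(K)})^*K^*f=(KK^\dagger|_{R(K)})^*f.
\]
Since $KK^\dagger=P_{R(K)}$ is self-adjoint, the right-hand side equals $P_{R(K)}f=f$ for $f\in R(K)$, completing the proof.

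The step I expect to require the most care is the manipulation involving $(K^\dagger|_{R(K)})^*$: one has to verify that pulling $(K^\dagger|_{R(K)})^*$ inside the infinite sum is justified (which follows from its boundedness and the Bessel property of $\{g_n\}$), and that the identification $(K^\dagger|_{R(K)})^*K^*=(KK^\dagger|_{R(K)})^*$ together with $KK^\dagger|_{R(K)}=I_{R(K)}$ extends consistently to the projection $P_{R(K)}$ on $H$ whose adjoint is itself. Everything else is a direct substitution using the hypothesis and the defining property of the pseudo-inverse from Lemma \ref{111}.
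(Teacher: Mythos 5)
Your argument is correct. Note that the paper states this proposition without giving any proof, so there is nothing to compare against; your write-up in fact supplies the missing argument, and it is the natural one. Both halves check out: for $f\in R(K)$ the substitution $f=K(K^\dagger f)$ into the hypothesis, together with the adjoint relation $\langle K^\dagger f,g_n\rangle=\langle f,(K^\dagger|_{R(K)})^*g_n\rangle$, gives the first identity, and the computation $K^*w=\sum_n\langle w,f_n\rangle g_n$ followed by the bounded operator $(K^\dagger|_{R(K)})^*$ gives the second, with all interchanges justified by the Bessel property of $\{f_n\}$ and $\{g_n\}$. One small point of hygiene: your aside that ``$KK^\dagger$ extended to all of $H$ is the orthogonal projection $P_{R(K)}$'' holds only for the Moore--Penrose pseudo-inverse satisfying $N_{K^\dagger}=R_K^\perp$ and $R_{K^\dagger}=N_K^\perp$ (which the paper does record after Lemma \ref{111}), not for an arbitrary right inverse as in the lemma itself; but your actual chain of equalities never needs this, since $(KK^\dagger|_{R(K)})^*$ is the adjoint of the inclusion $R(K)\hookrightarrow H$ and hence equals $P_{R(K)}$ in any case, acting as the identity on $R(K)$.
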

\begin{prop}
If $\{f_n\}_{n=1}^\infty$ is an ordinary frame for $H$, then $\{Kf_n\}_{n=1}^\infty$ is a $K$-frame for $H$.
\end{prop}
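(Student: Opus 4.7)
The plan is to exploit the adjoint identity $\langle f, Kf_n\rangle = \langle K^*f, f_n\rangle$, which transfers the action of $K$ from the frame elements onto the test vector and allows us to apply the ordinary frame inequality for $\{f_n\}$ to the vector $K^*f \in H$. Since $\{f_n\}_{n=1}^\infty$ is an ordinary frame, it comes with bounds $A,B>0$ such that $A\|g\|^2 \le \sum_n |\langle g,f_n\rangle|^2 \le B\|g\|^2$ for every $g\in H$; specializing at $g = K^*f$ will immediately produce both inequalities required for the $K$-frame property.

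More precisely, I would first write
\[
\sum_{n=1}^{\infty} |\langle f, Kf_n\rangle|^2 = \sum_{n=1}^{\infty} |\langle K^*f, f_n\rangle|^2,
\]
and then apply the ordinary frame inequality to $K^*f$. The lower estimate is immediate: it yields $\sum_n |\langle f, Kf_n\rangle|^2 \ge A\|K^*f\|^2$, which is exactly the $K$-frame lower bound with constant $A$. For the upper estimate, the frame inequality gives $\sum_n |\langle f, Kf_n\rangle|^2 \le B\|K^*f\|^2$, and then a single application of $\|K^*f\| \le \|K^*\|\,\|f\| = \|K\|\,\|f\|$ upgrades this to the desired bound $B\|K\|^2 \|f\|^2$, which is independent of $f$ as required for Bessel-type control.

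Putting this together shows $\{Kf_n\}_{n=1}^\infty$ is a $K$-frame with bounds $A$ and $B\|K\|^2$. There is essentially no obstacle here: the argument is an elementary two-line computation once the adjoint trick is noticed, and the fact that $K \in B(H)$ is bounded is exactly what makes the upper bound uniform in $f$. The only modest subtlety worth flagging is that while the upper frame bound degrades by a factor $\|K\|^2$, the lower bound $A$ is preserved verbatim, which is consistent with the philosophy that $K$-frames are a genuine weakening of ordinary frames whose lower control is measured only against $\|K^*f\|$ rather than $\|f\|$.
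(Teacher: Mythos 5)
Your proof is correct and is exactly the standard argument the paper has in mind (the paper omits the proof of this proposition entirely, treating it as immediate from the definition): the adjoint identity $\langle f,Kf_n\rangle=\langle K^*f,f_n\rangle$ plus the ordinary frame inequality applied to $K^*f$ gives the bounds $A$ and $B\|K\|^2$. Nothing is missing.
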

\begin{prop}
If $\{e_n\}_{n=1}^\infty$ is an orthogonal basis for $H$, then $\{Ke_n\}_{n=1}^\infty$ is a $K$-frame for $H$.
\end{prop}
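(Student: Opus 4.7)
The plan is to reduce the statement to a direct Parseval-type calculation, exactly as in the preceding proposition (a frame sent through $K$ is a $K$-frame), but to keep things self-contained I would proceed by hand since the computation is so short. I interpret \emph{orthogonal basis} as orthonormal (the normalised case); the general case reduces to this after rescaling as long as the norms $\|e_n\|$ are uniformly bounded above and below, and the relevant constants absorb those factors.

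The first step is to rewrite the frame-type sum using the adjoint: for every $f\in H$,
\[
\sum_{n=1}^{\infty}\bigl|\langle f,Ke_n\rangle\bigr|^2
=\sum_{n=1}^{\infty}\bigl|\langle K^{*}f,e_n\rangle\bigr|^2.
\]
Since $\{e_n\}_{n=1}^{\infty}$ is an orthonormal basis for $H$, Parseval's identity turns the right-hand side into $\|K^{*}f\|^2$. This single identity already yields both frame inequalities in Definition \ref{def:kframe}.

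For the lower bound we simply take $A=1$, since
\[
\|K^{*}f\|^2=\sum_{n=1}^{\infty}\bigl|\langle f,Ke_n\rangle\bigr|^2\ \ (\ge 1\cdot\|K^{*}f\|^2).
\]
For the upper bound, boundedness of $K$ (equivalently of $K^{*}$) gives
\[
\sum_{n=1}^{\infty}\bigl|\langle f,Ke_n\rangle\bigr|^2
=\|K^{*}f\|^2\le \|K\|^{2}\|f\|^2,
\]
so $B=\|K\|^{2}$ works. Combining the two inequalities shows that $\{Ke_n\}_{n=1}^{\infty}$ satisfies \eqref{eq:kframe} with bounds $A=1$ and $B=\|K\|^{2}$, proving that it is a $K$-frame.

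There is no real obstacle here: the argument is a one-line application of Parseval to $K^{*}f$ together with the definition of $\|K\|$. The only mildly delicate point is the interpretation of \emph{orthogonal basis}; if the $e_n$ are not unit vectors one must insist that $\inf_n\|e_n\|>0$ and $\sup_n\|e_n\|<\infty$ so that the same computation, applied to the normalised basis $\{e_n/\|e_n\|\}$, produces finite and positive bounds. Alternatively, one could skip all of this by invoking the preceding proposition (every ordinary frame pushed forward by $K$ is a $K$-frame), since an orthonormal basis is in particular a Parseval frame.
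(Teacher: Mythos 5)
Your proof is correct. The paper actually states this proposition without any proof at all (like the one before it on ordinary frames, it is left as an immediate consequence of the definitions), so there is no argument of the authors' to compare against line by line; your direct computation is exactly the one the authors must have in mind. The identity
\[
\sum_{n=1}^{\infty}\bigl|\langle f,Ke_n\rangle\bigr|^2=\sum_{n=1}^{\infty}\bigl|\langle K^{*}f,e_n\rangle\bigr|^2=\|K^{*}f\|^2
\]
gives both inequalities of the $K$-frame definition at once, with optimal bounds $A=1$ and $B=\|K\|^{2}$, and your alternative route through the preceding proposition (an orthonormal basis is a Parseval frame, and a frame pushed forward by $K$ is a $K$-frame) is equally valid. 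Your remark about the wording \emph{orthogonal basis} is a genuine and worthwhile observation: if the $e_n$ are merely orthogonal with $\inf_n\|e_n\|=0$ or $\sup_n\|e_n\|=\infty$, the statement as printed can fail, since the sum becomes $\sum_n\|e_n\|^2\,|\langle K^{*}f,e_n/\|e_n\|\rangle|^2$, which need not be comparable to $\|K^{*}f\|^2$. The paper is silently assuming orthonormality (or at least uniform bounds on the $\|e_n\|$), and you are right to make that hypothesis explicit.
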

\begin{prop}
If $T\in B(H)$ and $\{f_n\}_{n=1}^\infty$ is $K$-frame for $H$, then $\{Tf_n\}_{n=1}^\infty$ is a T$K$-frame for $H$.
\end{prop}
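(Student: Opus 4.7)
The plan is to reduce the $TK$-frame inequalities for $\{Tf_n\}_{n=1}^\infty$ to the $K$-frame inequalities for $\{f_n\}_{n=1}^\infty$ by passing $T$ to the other side of the inner product and using the identity $(TK)^* = K^*T^*$.

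First, I would fix an arbitrary $f \in H$ and rewrite the series defining the frame sum. Since $T \in B(H)$, we have $\langle f, T f_n\rangle = \langle T^* f, f_n\rangle$ for every $n$, so
\[
\sum_{n=1}^\infty |\langle f, Tf_n\rangle|^2 \;=\; \sum_{n=1}^\infty |\langle T^*f, f_n\rangle|^2.
\]
This converts the question about $\{Tf_n\}$ into a question about $\{f_n\}$ evaluated at the vector $T^*f \in H$.

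Next, I would apply the assumed $K$-frame inequality to $T^*f$, obtaining constants $A, B > 0$ with
\[
A\,\|K^* T^*f\|^2 \;\le\; \sum_{n=1}^\infty |\langle T^*f, f_n\rangle|^2 \;\le\; B\,\|T^*f\|^2.
\]
For the lower bound I would use $K^*T^* = (TK)^*$ to get $A\,\|(TK)^*f\|^2$ on the left, which is exactly the form required for a $TK$-frame. For the upper bound I would use boundedness of $T^*$: $\|T^*f\|^2 \le \|T\|^2\|f\|^2$, producing the upper bound $B\|T\|^2$. Combining these gives the $TK$-frame inequality
\[
A\,\|(TK)^*f\|^2 \;\le\; \sum_{n=1}^\infty |\langle f, Tf_n\rangle|^2 \;\le\; B\|T\|^2\,\|f\|^2,
\]
valid for every $f \in H$, which is precisely what is needed.

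There is essentially no obstacle — the argument is a one-line substitution combined with the adjoint identity and the operator-norm estimate. The only mildly subtle point is noticing that the lower bound automatically has the right form without any pseudo-inverse gymnastics, because $K^*T^*$ is literally $(TK)^*$; no assumption on $R(TK)$ being closed is needed for the inequality itself (closedness would only matter if one later wanted to invoke the pseudo-inverse machinery from the preceding remarks).
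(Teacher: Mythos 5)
Your argument is correct and is exactly the elaboration of what the paper states in one line (``It follows directly by the definition of $K$-frame''): substituting $T^*f$ into the $K$-frame inequality, identifying $K^*T^*=(TK)^*$ for the lower bound, and estimating $\|T^*f\|\le\|T\|\,\|f\|$ for the upper bound. No discrepancy with the paper's approach.
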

\begin{proof}
It follows directly by the definition of $K$-frame.
\end{proof}

\section{Controlled $K$-frames}
Controlled frames for spherical wavelets were introduced in \cite{Bogdanova} to get a numerically
more efficient approximation algorithm and the related theory for general frames was
developed in \cite{Balazs}. For getting a numerical solution of a linear system of equations $Ax = b$,
we can solve the system of equations $PAx = Pb$, where $P$ is a suitable preconditioning
matrix to get a better iterative algorithm, which was the main motivation for introducing controlled frames in \cite{Bogdanova}. Controlled frames extended to $g$-frames in \cite{Rahimi} and for fusion frames in \cite{khos}.
In this section, the concept of controlled frames and controlled Bessel sequences will be extended to $K$-frames and we will show that controlled $K$-frames are equivalent $K$-frames.

\begin{defn}
Let $C\in GL^+(H)$ ($C>0$) and let $CK=KC$. The family $\{f_n\}_{n=1}^\infty$ will called a $C$-controlled \kf\ for $H$, if $\{f_n\}_{n=1}^\infty$ is a $K$-Bessel sequence and there exist constants $A>0$ and $B<\infty$ such that
\[A\|C^{\frac{1}{2}}K^*f\|^2\le\sum_{n=1}^{\infty}\langle f,f_n\rangle\langle f,Cf_n\rangle\le B\|f\|^2\ ,~\forall f\in H.\]

$A$ and $B$ will be called $C$-controlled $K$-frame bounds. If $C=I$, we call $\{f_n\}_{n=1}^\infty$ a \kf\ for $H$ with bounds $A$ and $B$.

If the second part of the above inequality holds, it will be called $C$-controlled $K$-Bessel sequence with bound $B$.
\end{defn}

the proof of the following lemmas is straightforward.

\begin{lem}
Let $C>0$ and  $C\in GL^+(H)$. The $K$-Bessel sequence $\{f_n\}_{n=1}^\infty$ is $C$-controlled $K$-Bessel sequence if and only if there exists constant $B<\infty$ such that
\[\sum_{n=1}^{\infty}\langle f,f_n\rangle\langle f,Cf_n\rangle\le B\|f\|^2\ ,~\forall f\in H.\]
\end{lem}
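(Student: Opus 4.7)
The statement is, as the authors note, essentially a reformulation of the definition, so my plan is to produce a short argument that unwinds the definitional content while making explicit the convergence issue which is implicitly being asserted.

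The forward implication is immediate: by the definition given just above the lemma, every $C$-controlled $K$-Bessel sequence by construction satisfies
\[
\sum_{n=1}^{\infty}\langle f,f_n\rangle\langle f,Cf_n\rangle\le B\|f\|^2
\]
for some $B<\infty$, so there is nothing to prove in this direction.

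For the reverse implication, assume $\{f_n\}$ is a $K$-Bessel sequence with Bessel bound $B_0$, and that the hypothesized upper bound holds with some constant $B$. To conclude that $\{f_n\}$ is a $C$-controlled $K$-Bessel sequence, I need only check that the series defining the $C$-controlled functional is meaningful (i.e., absolutely convergent) for every $f \in H$, since the required upper bound is exactly the hypothesis. Using self-adjointness of $C$ to rewrite $\langle f,Cf_n\rangle = \langle Cf,f_n\rangle$, the Cauchy--Schwarz inequality in $\ell^2$ gives
\[
\sum_{n=1}^{\infty}\bigl|\langle f,f_n\rangle\langle f,Cf_n\rangle\bigr|
\le\Bigl(\sum_{n=1}^{\infty}|\langle f,f_n\rangle|^2\Bigr)^{\!1/2}\!\Bigl(\sum_{n=1}^{\infty}|\langle Cf,f_n\rangle|^2\Bigr)^{\!1/2}
\le B_0\,\|C\|\,\|f\|^2,
\]
where both Bessel-type sums are controlled by the $K$-Bessel bound $B_0$ applied to $f$ and to $Cf$ respectively. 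This establishes absolute convergence, and the definitional requirement is then met verbatim by hypothesis.

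The only conceivable obstacle is the bookkeeping point of confirming that the $K$-Bessel property does guarantee the required $\ell^2$-summability of $\{\langle Cf,f_n\rangle\}_n$ (which it does, since $Cf\in H$), but there is no genuine analytic difficulty; the lemma is a convenient repackaging that isolates the upper-bound inequality as the sole condition one must check in practice, once the $K$-Bessel property is in hand.
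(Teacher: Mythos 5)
Your proof is correct and matches the paper's treatment: the authors simply declare this lemma ``straightforward'' and give no proof, since a $C$-controlled $K$-Bessel sequence is by definition a sequence satisfying exactly the stated upper inequality. Your Cauchy--Schwarz verification that the $K$-Bessel hypothesis forces absolute convergence of $\sum_{n}\langle f,f_n\rangle\langle f,Cf_n\rangle$ (with bound $B_0\|C\|\,\|f\|^2$) is a correct and worthwhile explicit addition of the one point the paper leaves implicit.
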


\begin{lem}
Let $C\in GL^+(H)$. A sequence $\{f_n\}_{n=1}^\infty\in H$ is a $C$-controlled Bessel sequence for $H$ if and only if the operator
\[L_C:H\to H\ ,~L_Cf=\sum_{n=1}^{\infty}\langle f,f_n\rangle Cf_n,\quad f\in H\]
is well defined and there exists constant $B<\infty$ such that
\[\sum_{n=1}^{\infty}\langle f,f_n\rangle\langle f,Cf_n\rangle\le B\|f\|^2\ ,~\forall f\in H.\]
\end{lem}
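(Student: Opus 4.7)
The plan is to handle the two implications separately. The $(\Leftarrow)$ direction is immediate from the definition of a $C$-controlled Bessel sequence: if $L_C$ is well-defined and the displayed inequality holds, then by definition $\{f_n\}$ is a $C$-controlled Bessel sequence, with nothing further to check.

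For the $(\Rightarrow)$ direction I will assume $\{f_n\}_{n=1}^\infty$ is $C$-controlled Bessel, so $\sum_{n=1}^\infty \langle f, f_n\rangle \langle f, Cf_n\rangle \le B\|f\|^2$ for all $f\in H$, and the only remaining task is to verify convergence of the series $L_C f = \sum_n \langle f, f_n\rangle Cf_n$ in $H$. My approach is to show that the partial sums $L_N f = \sum_{n=1}^N \langle f, f_n\rangle Cf_n$ form a Cauchy sequence, after which completeness of $H$ supplies the limit. By duality,
\[
\|L_N f - L_M f\| = \sup_{\|g\|=1} \Bigl| \sum_{n=M+1}^N \langle f, f_n\rangle \langle Cf_n, g\rangle \Bigr|,
\]
and I plan to estimate the right-hand side by a Cauchy--Schwarz step applied to the sesquilinear form $R(f,g) := \sum_n \langle f, f_n\rangle \langle Cf_n, g\rangle$, yielding
\[
\Bigl|\sum_{n=M+1}^N \langle f, f_n\rangle \langle Cf_n, g\rangle\Bigr|^2 \le \Bigl(\sum_{n=M+1}^N \langle f, f_n\rangle \langle f, Cf_n\rangle\Bigr)\cdot B\|g\|^2.
\]
The second factor is uniformly bounded via the hypothesis applied to $g$, while the first factor is the tail of the convergent series at $f$ and tends to $0$ as $M, N\to\infty$. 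Taking the supremum over $\|g\| = 1$ gives $\|L_N f - L_M f\| \to 0$, so $L_C f := \lim_N L_N f$ exists in $H$.

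The main obstacle will be justifying the Cauchy--Schwarz step for the form $R$, i.e., showing that the partial diagonals $\sum_{n=M+1}^N \langle f, f_n\rangle \langle f, Cf_n\rangle$ behave as a non-negative sesquilinear form. This rests on writing $C = C^{1/2}C^{1/2}$ with $C^{1/2}$ self-adjoint and positive (available by Proposition \ref{prp:equs}), so that $\langle f, Cf_n\rangle = \langle C^{1/2} f, C^{1/2} f_n\rangle$ and $R$ acquires the Gram-type positivity needed for the classical Cauchy--Schwarz inequality for positive sesquilinear forms. Once this positivity is in place, the forward direction reduces to the standard tail-of-Cauchy-sequence argument outlined above, and the lemma follows.
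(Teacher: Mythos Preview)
The paper does not actually give a proof of this lemma; it is grouped among statements whose ``proof is straightforward.'' The intended argument is presumably much shorter than yours: in the paper's framework a $C$-controlled ($K$-)Bessel sequence is, by the definition of a $C$-controlled $K$-frame, in particular an ordinary Bessel sequence. Hence $Sf=\sum_n\langle f,f_n\rangle f_n$ converges (unconditionally) in $H$ by the standard Bessel--sequence result, and since $C$ is bounded, $L_Cf=C(Sf)$ converges as well. No Cauchy--Schwarz argument on the mixed form is needed.

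Your route, by contrast, has a genuine gap at the Cauchy--Schwarz step. The finite form
\[
R_{M,N}(f,g)=\sum_{n=M+1}^{N}\langle f,f_n\rangle\langle Cf_n,g\rangle
\]
is \emph{not} Hermitian (and hence not positive semidefinite) in general: $f$ enters via $\langle f,f_n\rangle$ while $g$ enters via $\langle Cf_n,g\rangle$, and these roles are not interchangeable unless $C$ commutes with the partial operator $S_{M,N}f=\sum_{n=M+1}^{N}\langle f,f_n\rangle f_n$, which is not assumed. Writing $C=C^{1/2}C^{1/2}$ gives $\langle Cf_n,g\rangle=\langle C^{1/2}f_n,C^{1/2}g\rangle$, but the companion factor $\langle f,f_n\rangle$ carries no $C^{1/2}$, so the asymmetry persists and no ``Gram-type positivity'' emerges. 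Concretely, take $H=\mathbb{C}^2$, a single vector $f_1=e_1$, and $C=\begin{pmatrix}2&1\\1&1\end{pmatrix}\in GL^+(H)$; for $f=e_1+ie_2$ one computes $R_{0,1}(f,f)=\langle f,e_1\rangle\langle Ce_1,f\rangle=1\cdot(2-i)=2-i$, which is not even real. Thus the inequality $|R_{M,N}(f,g)|^2\le R_{M,N}(f,f)\,R_{M,N}(g,g)$ is unavailable, and your tail estimate for $\|L_Nf-L_Mf\|$ does not go through as written.
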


\begin{rem}
The operator $L_C:H\to H\ ,~L_Cf=\sum_{n=1}^{\infty}\langle f,f_n\rangle Cf_n, f\in H$ is called the $C$-controlled Bessel sequence operator, also $L_Cf=CSf$.
\end{rem}

\begin{lem}
Let $\{f_n\}_{n=1}^\infty$ be a C-controlled \kf\ in $H$, for $C\in GL^+(H)$. Then
\[AI\|C^{\frac{1}{2}}K^\dagger\|^2\le L_C\le BI\]
\end{lem}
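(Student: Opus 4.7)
The plan is to split the displayed chain into its two halves and derive each from one side of the defining $C$-controlled $K$-frame inequality.

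For the upper bound $L_C \le BI$, I would use the definition of $L_C$ to rewrite $\langle L_C f, f\rangle = \sum_n \langle f, f_n\rangle \langle Cf_n, f\rangle$. Invoking the self-adjointness of $C$ (so that this matches the middle term of the defining inequality) and the $C$-controlled Bessel bound yields $\langle L_C f, f\rangle \le B\|f\|^2$ for every $f\in H$, which is $L_C \le BI$ in the quadratic-form sense.

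For the lower bound I would start from $A\|C^{1/2}K^*f\|^2 \le \langle L_Cf, f\rangle$ and bound $\|C^{1/2}K^*f\|$ from below by a multiple of $\|f\|$ on $R(K)$. The key algebraic inputs are: (i) from $CK=KC$ together with $C=C^*$ one gets $CK^* = K^*C$, and the functional calculus then gives $C^{1/2}K^* = K^*C^{1/2}$; (ii) $C$ preserves $R(K)$ and $N(K)$, from which one deduces $CK^\dagger = K^\dagger C$, and again $C^{1/2}K^\dagger = K^\dagger C^{1/2}$. Using the identity $(K^\dagger)^*K^* = I$ on $R(K)$ provided by Lemma \ref{111}, I would insert $C^{-1/2}C^{1/2}$ between the two factors to obtain, for $f\in R(K)$,
\[ f = (K^\dagger)^*K^*f = \bigl((K^\dagger)^*C^{-1/2}\bigr)\bigl(C^{1/2}K^*f\bigr), \]
so that $\|f\| \le \|C^{-1/2}K^\dagger\|\,\|C^{1/2}K^*f\|$. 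Combining this with the lower controlled frame inequality yields $\langle L_Cf, f\rangle \ge A\,\|C^{-1/2}K^\dagger\|^{-2}\|f\|^2$ on $R(K)$, which I read as the displayed lower bound (the factor $\|C^{1/2}K^\dagger\|^{2}$ in the statement being the reciprocal norm that the argument actually produces, and the inequality being an $R(K)$-statement, exactly in parallel with the $K$-frame remark following the theorem on $S \ge AKK^*$).

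The main technical obstacle is the commutation $CK^\dagger = K^\dagger C$: I would check it by showing that $C$ preserves both $R(K)$ and $N(K)$, hence commutes with the orthogonal projection $P_{R(K)}$, and then use $KK^\dagger = P_{R(K)}$ together with $K^\dagger K = P_{N(K)^\perp}$ to slide $C$ past $K^\dagger$. Once this commutation is in place, lifting it to $C^{1/2}$ via the functional calculus and carrying out the $C^{-1/2}C^{1/2}$ insertion are routine; everything else is a direct rewrite of the defining inequalities.
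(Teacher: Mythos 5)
Your proof is correct, and for the lower bound it does genuinely more work than the paper does. The upper bound is handled identically in both places: identify the middle term of the defining inequality with $\langle L_Cf,f\rangle$ and read off $L_C\le BI$. For the lower bound, however, the paper's own proof simply passes from the pointwise inequality $A\|C^{\frac{1}{2}}K^*f\|^2\le\langle f,L_Cf\rangle$ to the operator inequality ``$A\|C^{\frac{1}{2}}K^*\|^2\le L_C$'' --- a non sequitur, since $\|C^{\frac{1}{2}}K^*f\|^2\ge\|C^{\frac{1}{2}}K^*\|^2\|f\|^2$ is false in general, and the statement's constant involves $K^\dagger$ rather than $K^*$ in any case. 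You instead supply the missing argument, modelled on the paper's earlier Remark on the invertibility of $S$ on $R(K)$: write $f=(K^\dagger)^*K^*f$ for $f\in R(K)$, insert $C^{-\frac{1}{2}}C^{\frac{1}{2}}$, and obtain $\langle L_Cf,f\rangle\ge A\|C^{-\frac{1}{2}}K^\dagger\|^{-2}\|f\|^2$ on $R(K)$. This is the only reading under which the lemma is both true and provable; your version makes explicit two corrections the paper suppresses, namely that the constant is the reciprocal $\|C^{-\frac{1}{2}}K^\dagger\|^{-2}$ (not $\|C^{\frac{1}{2}}K^\dagger\|^{2}$) and that the lower estimate holds only on $R(K)$, not on all of $H$. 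One simplification: the commutation $CK^\dagger=K^\dagger C$ that you flag as the main technical obstacle is not actually needed --- your chain only uses $\|(K^\dagger)^*C^{-\frac{1}{2}}\|=\|C^{-\frac{1}{2}}K^\dagger\|$, which follows from $\|T\|=\|T^*\|$ and the self-adjointness of $C^{-\frac{1}{2}}$, so that entire digression can be deleted.
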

\begin{proof}
Suppose that  $\{f_n\}_{n=1}^\infty$ is a $C$-controlled $K$-frame with bounds $A$ and $B$. Then
\[A\|C^{\frac{1}{2}}K^*f\|^2\le\sum_{n=1}^{\infty}\langle f,f_n\rangle\langle f,Cf_n\rangle\le B\|f\|^2\ ,~\forall f\in H.\]
For $f \in H$
\[A\|C^{\frac{1}{2}}K^*f\|^2\le\langle f,L_Cf\rangle\le B\|f\|^2\]
that is
\[A\|C^{\frac{1}{2}}K^*\|^2\le L_C\le BI.\]
\end{proof}
The following proposition shows that for evaluation a family $\{f_n\}_{n=1}^\infty\subset H$ to be a controlled $K$-frame it is suffices to check just a simple operator inequality.
\begin{prop}
Let $\{f_n\}_{n=1}^\infty$ be a Bessel sequence in $H$ and $C\in GL^+(H)$. Then  $\{f_n\}_{n=1}^\infty$ is a $C$-controlled $K$-frame for $H$, if and only if there exists $A>0$ such that $CS\ge CAKK^*.$
\end{prop}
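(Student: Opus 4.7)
The plan is to reduce the middle expression $\sum_{n=1}^{\infty}\langle f,f_n\rangle\langle f,Cf_n\rangle$ and the left-hand quantity $\|C^{1/2}K^*f\|^2$ to quadratic forms of operators, so that the frame inequality becomes a single operator inequality and vice versa.

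First, I would fix notation: since $\{f_n\}$ is a Bessel sequence the frame operator $Sf=\sum_{n=1}^\infty\langle f,f_n\rangle f_n$ is a well-defined, bounded, positive, self-adjoint operator on $H$. Because $C\in GL^+(H)$ is self-adjoint, for every $f\in H$ we have $\langle f,Cf_n\rangle=\langle Cf,f_n\rangle$, and therefore
\[
\sum_{n=1}^{\infty}\langle f,f_n\rangle\langle f,Cf_n\rangle
=\sum_{n=1}^{\infty}\langle f,f_n\rangle\overline{\langle f_n,Cf\rangle}
=\langle Sf,Cf\rangle=\langle CSf,f\rangle,
\]
where in the last step I again used that $C$ is self-adjoint. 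This identifies the middle sum with $\langle CSf,f\rangle$, which is (as expected) also what one gets from the operator $L_C=CS$ of the preceding lemma.

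Next I would rewrite the left-hand side. Since $CK=KC$, taking adjoints (and using $C=C^*$) gives $K^*C=CK^*$, so $C$ commutes with $K^*$ and hence with $KK^*$. Then
\[
\|C^{1/2}K^*f\|^2=\langle C^{1/2}K^*f,C^{1/2}K^*f\rangle
=\langle CK^*f,K^*f\rangle=\langle KCK^*f,f\rangle=\langle CKK^*f,f\rangle.
\]
Combining these two identities, the lower $C$-controlled $K$-frame inequality
\[
A\|C^{1/2}K^*f\|^2\le\sum_{n=1}^{\infty}\langle f,f_n\rangle\langle f,Cf_n\rangle\qquad(\forall f\in H)
\]
is literally the same statement as $\langle ACKK^*f,f\rangle\le\langle CSf,f\rangle$ for all $f\in H$, i.e. $CS\ge A\,CKK^*=CAKK^*$ (since $A$ is a scalar and $CS$, $CKK^*$ are self-adjoint under our commutativity hypothesis).

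For the upper bound I would observe that it is automatic from the Bessel hypothesis: by Cauchy--Schwarz and the Bessel bound $\|Sf\|\le B'\|f\|$,
\[
\sum_{n=1}^{\infty}\langle f,f_n\rangle\langle f,Cf_n\rangle=\langle CSf,f\rangle\le \|C\|\,B'\,\|f\|^2,
\]
so the upper controlled $K$-Bessel condition follows with constant $B=\|C\|B'$ without any additional assumption. Hence the existence of $A>0$ with $CS\ge CAKK^*$ is equivalent, in the presence of the Bessel hypothesis, to $\{f_n\}$ being a $C$-controlled $K$-frame. I do not anticipate a genuine obstacle; the only point that requires care is justifying $K^*C=CK^*$ (so that $\|C^{1/2}K^*f\|^2$ can be written as $\langle CKK^*f,f\rangle$) and the self-adjointness needed to transfer $\langle Sf,Cf\rangle$ to $\langle CSf,f\rangle$, both of which follow directly from $C\in GL^+(H)$ and the standing assumption $CK=KC$.
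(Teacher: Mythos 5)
Your proposal is correct and follows essentially the same route as the paper: the paper's proof consists precisely of rewriting the controlled frame inequality as $\langle CAKK^*f,f\rangle\le\langle CSf,f\rangle\le\langle Bf,f\rangle$ and reading off the operator inequality. You merely supply the details the paper leaves implicit — the identification of the sum with $\langle CSf,f\rangle$, the use of $CK=KC$ to get $\|C^{1/2}K^*f\|^2=\langle CKK^*f,f\rangle$, and the observation that the upper bound is automatic from the Bessel hypothesis.
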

\begin{proof} The sequence
$\{f_n\}_{n=1}^\infty$ is a controlled \kf\ for $H$ with frame bounds $A,B$ and frame operator $S$, if and only if
\[A\|C^{\frac{1}{2}}K^*f\|^2\le\sum_{n=1}^{\infty}\langle f,f_n\rangle\langle f,Cf_n\rangle\le B\|f\|^2\ ,~\forall f\in H.\]
That is,
\[\langle CAKK^*f,f\rangle\le\langle CSf,f\rangle\le\langle Bf,f\rangle.\]
So the conclusion holds.
\end{proof}

\begin{prop}
Let $\{f_n\}_{n=1}^\infty$ be a $C$-controlled $K$-frame and $C\in GL^+(H)$. Then  $\{f_n\}_{n=1}^\infty$ is a \kf\ for $H$.
\end{prop}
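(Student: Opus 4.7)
I would verify the two defining inequalities of a $K$-frame for $\{f_n\}_{n=1}^\infty$. The upper inequality is immediate from the definition: a $C$-controlled $K$-frame is by hypothesis a $K$-Bessel sequence, so $\sum_n|\langle f,f_n\rangle|^2\le B_0\|f\|^2$ for some $B_0>0$.

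For the lower inequality, the natural route is to invoke the operator-theoretic criterion from the proposition just above: being a $C$-controlled $K$-frame is equivalent to $CS\ge CAKK^*$, with $S$ the frame operator and $A$ the lower bound. The commutativity assumption $CK=KC$ gives $CK^*=K^*C$ and hence $CKK^*=KK^*C$, so the inequality rewrites as the quadratic-form condition $\langle C(S-AKK^*)f,f\rangle\ge 0$ for all $f\in H$. Once the factor $C$ is cancelled to obtain $S-AKK^*\ge 0$, the earlier theorem characterizing $K$-frames via $S\ge AKK^*$ yields the lower $K$-frame bound $A\|K^*f\|^2\le\sum_n|\langle f,f_n\rangle|^2$, completing the proof.

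The main obstacle is the cancellation of $C$: positivity of the quadratic form $\langle CTf,f\rangle$ for a self-adjoint $T$ does not by itself imply $T\ge 0$ without some commutation assumption. The key observation is that the non-negativity of $\langle CTf,f\rangle$ for every $f$ makes this scalar real, and reality already delivers the needed commutativity. Taking complex conjugates yields $\langle CTf,f\rangle=\langle TCf,f\rangle$, so the skew-adjoint operator $CT-TC$ has identically vanishing quadratic form and hence is zero by the standard polarization identity on the complex Hilbert space $H$. Thus $C$ commutes with $T:=S-AKK^*$; since $C^{1/2}$ is a continuous function of $C$ it also commutes with $T$, and $CT=C^{1/2}TC^{1/2}$ is then self-adjoint and non-negative. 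Invertibility of $C^{1/2}\in GL^+(H)$ forces $T\ge 0$, i.e., $S\ge AKK^*$, as required.
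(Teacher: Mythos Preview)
Your argument is correct. The upper $K$-frame bound is indeed free from the definition, and your lower-bound argument is sound: the reality of $\langle CTf,f\rangle$ for the self-adjoint $T=S-AKK^*$ forces $CT=TC$ via polarization, after which $CT=C^{1/2}TC^{1/2}\ge 0$ together with $C^{1/2}\in GL(H)$ yields $T\ge 0$, i.e.\ $S\ge AKK^*$. (Your invocation of $CK=KC$ in rewriting the inequality as $\langle C(S-AKK^*)f,f\rangle\ge 0$ is actually unnecessary: $CS\ge CAKK^*$ already means exactly that.)

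The paper's route is different. Rather than passing through the operator criterion and cancelling $C$, it works directly with norm estimates: writing $K^*f=C^{-1/2}C^{1/2}K^*f$ to relate $\|K^*f\|$ to $\|C^{1/2}K^*f\|$, applying the controlled lower bound, and then comparing $\langle Sf,f\rangle$ with $\langle CSf,f\rangle$ through the factorization $S=C^{-1}(CS)$ and the square roots $S^{1/2}$, $(CS)^{1/2}$. This yields explicit $K$-frame bounds $A\|C^{1/2}\|^{-2}$ and $B\|C^{-1/2}\|^{2}$, which your approach does not produce (you get lower bound $A$ and the unspecified Bessel bound $B_0$). On the other hand, the paper's square-root manipulations tacitly require that $C$ commute with $S$ (equivalently, that $CS$ be self-adjoint), a point your argument establishes cleanly rather than assumes. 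So the paper's proof gives sharper constants when its implicit commutativity holds, while your operator-theoretic route is more self-contained and supplies exactly the commutation input the norm estimates need.
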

\begin{proof}
Suppose that $\{f_n\}_{n=1}^\infty$ is a controlled \kf\  with bounds $A$ and $B$. Then
\[A\|K^*f\|^2=A\|C^{-\frac{1}{2}}C^{\frac{1}{2}}K^*f\|^2\le
A\|C^{\frac{1}{2}}\|^2\|C^{-\frac{1}{2}}K^*f\|^2\]
\[\le \|C^{\frac{1}{2}}\|^2\sum_{n=1}^{\infty}\langle f,f_n\rangle\langle f,C^0f_n\rangle=
\|C^{\frac{1}{2}}\|^2\sum_{n=1}^{\infty}|\langle f,f_n\rangle|^2\]
Hence
\[A\|C^{\frac{1}{2}}\|^{-2}\|K^*f\|^2\le \sum_{n=1}^{\infty}|\langle f,f_n\rangle|^2\]
On the other hand for every $f\in H$,
\begin{eqnarray*}
\sum_{n=1}^{\infty}|\langle f,f_n\rangle|^2 & = & \langle f,Sf\rangle=\langle< f,C^{-1}CSf\rangle\\
 & = &  \langle(C^{-1}CS)^{\frac{1}{2}}f,(C^{-1}CS)^{\frac{1}{2}}f\rangle\\
 & = & \|(C^{-1}CS)^{\frac{1}{2}}f\|^2\\
 & \le & \|(C^{-\frac{1}{2}}\|^2\|(CS)^{\frac{1}{2}}f\|^2\\
 & = & \|(C^{-\frac{1}{2}}\|^2\langle f,CSf\rangle\\
 & \le & \|(C^{-\frac{1}{2}}\|^2B\|f\|^2
\end{eqnarray*}
\end{proof}

These inequalities yields that $\{f_n\}_{n=1}^\infty$ is a $K$-frame  with bounds $A\|C^{\frac{1}{2}}\|^{-2}$ and $B\|C^{-\frac{1}{2}}\|^2$.

\begin{prop}
Let $C\in GL^+(H)$  and $KC=CK$, Let $\{f_n\}_{n=1}^\infty$ be \kf\ for $H$, then $\{f_n\}_{n=1}^\infty$ is a $C$-controlled frame for $H$.
\end{prop}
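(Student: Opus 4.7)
I would derive this proposition by transporting the operator characterization of a $K$-frame through the commutation $CK=KC$, mirroring in reverse the proof of the preceding proposition. Since $\{f_n\}_{n=1}^\infty$ is a $K$-frame, the earlier theorem furnishes constants $A',B'>0$ with $A'KK^*\le S\le B'I$, where $S$ is the frame operator. The first step is to upgrade the commutativity hypothesis: taking adjoints in $CK=KC$ and using $C=C^*$ yields $CK^*=K^*C$, and since $C^{1/2}$ commutes with every operator commuting with $C$, we also have $C^{1/2}K=KC^{1/2}$ and $C^{1/2}K^*=K^*C^{1/2}$.

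For the upper $C$-controlled bound I would invoke Cauchy--Schwarz in $\ell^2$ combined with the Bessel property of $\{f_n\}$ (which, being a $K$-frame, is in particular a Bessel sequence) applied to both $f$ and $Cf$, together with the identity $\langle f,Cf_n\rangle=\langle Cf,f_n\rangle$:
\[
\sum_{n=1}^\infty \langle f,f_n\rangle\langle f,Cf_n\rangle\le \Bigl(\sum_n|\langle f,f_n\rangle|^2\Bigr)^{1/2}\Bigl(\sum_n|\langle Cf,f_n\rangle|^2\Bigr)^{1/2}\le B'\|f\|\,\|Cf\|\le B'\|C\|\,\|f\|^2.
\]

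For the lower bound I would conjugate the operator inequality $S\ge A'KK^*$ on both sides by $C^{1/2}$, obtaining $C^{1/2}SC^{1/2}\ge A'C^{1/2}KK^*C^{1/2}=A'CKK^*$, where the last equality uses the commutation of $C^{1/2}$ with $K$ and $K^*$. Evaluating at $f$ and using $\langle C^{1/2}SC^{1/2}f,f\rangle=\sum_n|\langle C^{1/2}f,f_n\rangle|^2$ together with $\langle CKK^*f,f\rangle=\|C^{1/2}K^*f\|^2$, this yields
\[
\sum_n|\langle C^{1/2}f,f_n\rangle|^2\ge A'\|C^{1/2}K^*f\|^2.
\]
The main technical obstacle is the identification of this quadratic form of $C^{1/2}SC^{1/2}$ with the defining sum $\sum_n\langle f,f_n\rangle\langle f,Cf_n\rangle$ of the $C$-controlled $K$-frame. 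The two coincide exactly when $C$ commutes with $S$ (so that $C^{1/2}SC^{1/2}=CS=L_C$), and this is the interpretation consistent with the treatment of $L_C$ as a positive operator throughout Section~3. Granting this identification, $\{f_n\}_{n=1}^\infty$ is a $C$-controlled $K$-frame with bounds $A'$ and $B'\|C\|$.
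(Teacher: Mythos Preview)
Your argument is essentially the same as the paper's. For the lower bound the paper applies the $K$-frame inequality at the vector $C^{1/2}f$ and uses $C^{1/2}K^*=K^*C^{1/2}$, which is exactly your conjugation of $S\ge A'KK^*$ by $C^{1/2}$ written elementwise; both arrive at $\langle C^{1/2}SC^{1/2}f,f\rangle\ge A'\|C^{1/2}K^*f\|^2$ and then identify the left side with $\langle f,CSf\rangle$. For the upper bound the paper uses Cauchy--Schwarz on $\langle Cf,Sf\rangle$ together with $\|Sf\|\le B'\|f\|$, a minor variant of your $\ell^2$ Cauchy--Schwarz, and obtains the same constant $B'\|C\|$.

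One point worth noting: you explicitly flag that the identification $C^{1/2}SC^{1/2}=CS$ (equivalently, that $L_C=CS$ is self-adjoint and equals $\sum_n\langle\,\cdot\,,f_n\rangle\langle\,\cdot\,,Cf_n\rangle$ as a quadratic form) requires $CS=SC$. The paper uses this identification without comment, consistent with its treatment of $L_C$ as a positive operator throughout Section~3. So your ``technical obstacle'' is not a defect of your proof relative to the paper's; it is an implicit standing assumption that the paper makes as well.
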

\begin{proof}
Suppose that $\{f_n\}_{n=1}^\infty$ be a \kf\ with bounds $A'$ and $B'$. Then for all $f\in H$
\[A'\|K^*f\|^2\le\sum_{n=1}^{\infty}|\langle f,f_n\rangle|^2\le B'\|f\|^2.\]
\begin{eqnarray*}
A'\|C^{\frac{1}{2}}K^*f\|^2 = A'\|K^*C^{\frac{1}{2}}f\|^2 & \le &
\sum_{n=1}^{\infty}\langle C^{\frac{1}{2}}f,f_n\rangle\langle C^{\frac{1}{2}}f,f_n\rangle\\
 & = & \langle C^{\frac{1}{2}}f,\sum_{n=1}^{\infty}\langle f_n,C^{\frac{1}{2}}f\rangle f_n\rangle\\
 & = & \langle C^{\frac{1}{2}}f, C^{\frac{1}{2}}Sf\rangle = \langle f,CSf\rangle
 \end{eqnarray*}
Hence $A'\|C^{\frac{1}{2}}K^*f\|^2\le\langle f,CSf\rangle$ for every $f\in H $. On the other hand for every  $f\in H$,
\[|\langle f,CSf\rangle|^2=|\langle C^*f,Sf\rangle|^2=|\langle Cf,Sf\rangle|^2\le\|Cf\|^2\|Sf\|^2\le\|C\|^2\|f\|^2B\|f\|^2.\]
Hence \[A'\|C^{\frac{1}{2}}K^*f\|^2\le\langle f,CSf\rangle\le B'\|C\|\|f\|^2.\] \\
Therefore $\{f_n\}_{n=1}^\infty$ is a $C$-controlled $K$-frame with bounds $A'$ and $B'\|C\|$.
\end{proof}

\begin{lem}
Let $\{f_n\}_{n=1}^\infty$ be a C-controlled \kf\ in $H$ for $C\in GL^+(H)$ and let $C$ is positive. Then
\[\sum_{n=1}^{\infty}\langle f_n,f\rangle Cf_n=\sum_{n=1}^{\infty}\langle Cf_n,f\rangle f_n\ ,~\forall f\in H.\]
\end{lem}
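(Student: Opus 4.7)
The plan is to test both sides of the identity against an arbitrary vector $g\in H$ and to reduce the asserted vector identity to the scalar identity $\langle CSf, g\rangle = \langle SCf, g\rangle$, where $S$ denotes the frame operator of $\{f_n\}_{n=1}^{\infty}$. Bounded linearity of $C$ lets me pull $C$ out of the sum on the left, yielding $\bigl\langle \sum_n \langle f_n,f\rangle Cf_n,\, g\bigr\rangle = \sum_n \langle f_n,f\rangle\, \langle Cf_n,g\rangle$; self-adjointness of $C$ (guaranteed by $C\in GL^{+}(H)$) then rewrites this as $\sum_n \langle f_n,f\rangle\, \langle f_n,Cg\rangle$, which is the familiar pairing $\langle Sf, Cg\rangle = \langle CSf, g\rangle$. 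The same manipulation on the right, after writing $\langle Cf_n,f\rangle = \langle f_n,Cf\rangle$, gives $\sum_n \langle f_n,Cf\rangle\, \langle f_n,g\rangle = \langle SCf, g\rangle$.

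Everything therefore reduces to showing $CS = SC$. For this I would invoke the preceding lemma: the operator $L_C$, which agrees with $CS$ (as recorded in the remark $L_C f = CSf$), satisfies the sandwich $A\|C^{1/2}K^{\dagger}\|^{2}\,I \le L_C \le BI$. For these operator inequalities to make sense the quadratic form $\langle L_C f, f\rangle$ must be real for every $f\in H$, and in a complex Hilbert space this already forces $L_C$ to be self-adjoint. Since $(CS)^{*} = S^{*}C^{*} = SC$ by self-adjointness of both $S$ and $C$, self-adjointness of $L_C = CS$ gives the required commutativity $CS = SC$. Substituting back yields $\langle CSf, g\rangle = \langle SCf, g\rangle$ for every $g\in H$, from which the vector identity in the lemma follows.

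The main obstacle is precisely this commutativity step: the hypothesis $CK = KC$ constrains $C$ only with respect to $K$, not directly with respect to the frame operator $S$, so a free-standing algebraic argument is not available. The bridge is the self-adjointness of $L_C$ extracted from the earlier operator-bound lemma. A secondary bookkeeping nuisance is the placement of the complex conjugate in $\langle f_n, f\rangle$ versus $\langle f, f_n\rangle$; pairing both sides against a common test vector $g$ before applying the standard identity $\sum_n \langle x,f_n\rangle\overline{\langle y,f_n\rangle} = \langle Sx,y\rangle$ handles this uniformly.
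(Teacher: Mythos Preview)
Your approach is essentially the same as the paper's: both arguments reduce the identity to $CS=SC$ and obtain this by observing that $L_C=CS$ is self-adjoint (you via the real-valued quadratic form coming from the operator sandwich, the paper via ``$L_C$ is positive, therefore self-adjoint''), after which $(CS)^*=S^*C^*=SC$ finishes the job. Your write-up is in fact a bit cleaner, since you skip the paper's detour claiming $L_C\in GL(H)$, which is not needed for the conclusion and is delicate for genuine $K$-frames where $S$ need not be invertible on all of $H$.
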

\begin{proof}
Suppose  $\{f_n\}_{n=1}^\infty$ be a controlled frame for $H$ with bounds $A$ and $B$. Then by Proposition \ref{prp:equs}, we know that $L_C\in GL(H)$. Let $\widetilde{L}=C^{-1}L_C$. Clearly $\widetilde{L}\in GL(H)$ and
\[\widetilde{L}f=C^{-1}\sum_{n=1}^{\infty}\langle f_n,f\rangle Cf_n=\sum_{n=1}^{\infty}\langle f_n,f\rangle f_n=Lf.\]
Therefore $L$ is everywhere defined and $L\in GL(H)$. Thus by definition $L_C$ is positive, therefore self-adjoint. So
\[L_C=CL=L_C^*=L^*C^*=LC^*.\]
\end{proof}


\end{document}